\theoremstyle{plain}
\newtheorem{Lem}{Lemma}[section]
\newtheorem{Thm}[Lem]{Theorem}
\theoremstyle{definition} 
\newtheorem{Ex}[Lem]{Example}
\newtheorem{Rk}[Lem]{Remark}
\newtheorem{Def}[Lem]{Definition}}
\DeclareMathOperator*{\holim}{holim}
\DeclareMathOperator*{\hocolim}{hocolim}
\DeclareMathOperator*{\colim}{colim}
\DeclareMathOperator*{\lims}{lim^\mathit{s}}
\DeclareMathOperator*{\textprod}{\textstyle{\prod}}
\newcommand{\zig}{\addtocounter{Lem}{1}\tag{\theLem}}  
\def\:{\colon}
\title{Function spectra and continuous $G$-spectra}
\author{Daniel G. Davis}
\address{Department of Mathematics\\University of Louisiana 
at Lafayette\\Lafayette, LA 70504\\ U.S.A.}
\thanks{The author was partially supported by a grant 
(\#$\negthinspace \negthinspace$ LEQSF(2008-11)-RD-A-27) from the 
Louisiana Board of Regents.} 
\email{dgdavis@louisiana.edu}
\subjclass[2000]{55P42, 55P91, 55T15}
\begin{document}

\begin{abstract}
Let $G$ be a profinite group, 
$\{X_\alpha\}_\alpha$ a cofiltered diagram of discrete 
$G$-spectra, and $Z$ a spectrum with trivial $G$-action.  
We show how to define the homotopy fixed point spectrum 
$F(Z, \holim_\alpha X_\alpha)^{hG}$ and that 
when $G$ has finite virtual cohomological 
dimension ({\em vcd}$\,$), it is equivalent to 
$F(Z, \holim_\alpha \, (X_\alpha)^{hG})$. With these tools, we show 
that the $K(n)$-local Spanier-Whitehead dual 
is always a homotopy fixed point spectrum, a 
well-known Adams-type spectral sequence is actually a 
descent spectral sequence, and, for a sufficiently nice 
$k$-local profinite $G$-Galois extension $E$, with $K \vartriangleleft G$ and 
closed, the equivalence 
$(E^{h_kK})^{h_kG/K} \simeq E^{h_kG}$ (due to Behrens and the author), 
where $(-)^{h_k(-)}$ denotes 
$k$-local homotopy fixed points, can be upgraded to an 
equivalence that just uses ordinary 
({\em non-local}$\mspace{2.6mu}$) homotopy fixed points, when 
$G/K$ has finite vcd. 
\end{abstract}


\maketitle

\section{Introduction}\label{sectionone}
In this paper, all of our spectra are symmetric spectra of simplicial sets 
and we use $G$ to denote a profinite group. Also, 
as in \cite[Section 2.3]{joint}, we let $\Sigma\mathrm{Sp}_G$ be the 
category of discrete $G$-spectra. Thus, if $X \in 
\Sigma\mathrm{Sp}_G$, then, in particular, $X$ is a symmetric spectrum with a 
$G$-action and the symmetric sequence $\{X_i\}_{i \geq 0}$ of 
simplicial $G$-sets has the property that, for each $j \geq 0$, the 
action map on $j$-simplices,
\[G \times (X_i)_j \rightarrow (X_i)_j,\] is continuous, when the set 
$(X_i)_j$ is 
regarded as a discrete space.
\par
As in \cite[Section 4]{joint}, let 
\[\{X_\alpha\}_\alpha\] be a cofiltered diagram 
in $\Sigma\mathrm{Sp}_G$; thus, $\{X_\alpha\}_\alpha$ is 
a pro-discrete $G$-spectrum. Following \cite[Section 4]{joint}, 
we refer to the diagram 
$\{X_\alpha\}_\alpha$ as a 
{\em continuous $G$-spectrum} and the 
{\em $G$-homotopy fixed point spectrum} 
of the spectrum $\holim_\alpha X_\alpha$ is defined by 
\begin{equation}\label{hfps}\zig
\bigl(\holim_\alpha X_\alpha\bigr)^{hG} 
= \holim_\alpha \, (X_\alpha)^{hG}.\end{equation} 
\par
Now let $Z$ be any spectrum with trivial $G$-action 
and let $F(Z, \holim_\alpha X_\alpha)$ be the 
function spectrum. 
We can write the spectrum $Z$ as 
\begin{equation}\label{hocolim}\zig
Z \simeq \hocolim_\beta Z_\beta,\end{equation} 
a homotopy colimit of a directed system of finite 
spectra $Z_\beta$, and hence,
\begin{align*}
F(Z, \holim_\alpha X_\alpha) & \simeq \holim_\beta F(Z_\beta, \holim_\alpha 
X_\alpha) \\ 
& \simeq \holim_{\alpha, \beta} \, (X_\alpha \wedge DZ_\beta),\end{align*} 
where 
$DZ_\beta$ is the Spanier-Whitehead dual of $Z_\beta$. We regard 
each $DZ_\beta$ as having trivial $G$-action, so that each 
$X_\alpha \wedge DZ_\beta$, with the diagonal $G$-action, is a discrete 
$G$-spectrum. Thus, the diagram 
$\{X_\alpha \wedge DZ_\beta\}_{\alpha, \beta}$ 
is a 
continuous $G$-spectrum and hence, it is natural to make the following 
definition.
\begin{Def}\label{defone}
If $\{X_\alpha\}_\alpha$ is a continuous $G$-spectrum and $Z$ is a 
spectrum with trivial $G$-action, then we define
\begin{align*}
F(Z, \holim_\alpha X_\alpha)^{hG} & = \bigl(\holim_{\alpha, \beta}\,(X_\alpha 
\wedge DZ_\beta)\bigr)^{hG} \\ & = 
\holim_{\alpha, \beta}\,(X_\alpha 
\wedge DZ_\beta)^{hG},\end{align*} where the second equality follows 
immediately from Definition (\ref{hfps}).
\end{Def} 
Now suppose that $G$ has {\em finite vcd} 
(that is, ``finite virtual cohomological 
dimension"): this assumption means exactly that 
there is a natural number $m$ and an open subgroup 
$U$ of $G$ such that $H^s_c(U; M) = 0$, whenever $s >m$, for all discrete 
$U$-modules $M$. Here, $H^s_c(U; M)$ is the continuous cohomology 
of the profinite group $U$, with coefficients in $M$. 
\par
The above assumption of finite vcd, combined with 
\cite[Remark 7.16]{cts} and the fact that each $DZ_\beta$ is a finite 
spectrum, justifies the first equivalence in the 
following:
\begin{align*}
\holim_{\alpha, \beta}\,(X_\alpha 
\wedge DZ_\beta)^{hG} & \simeq \holim_{\alpha, \beta} \, 
\bigl((X_\alpha)^{hG} 
\wedge DZ_\beta\bigr) \\ & \simeq \holim_\alpha 
F\bigl(Z, (X_\alpha)^{\scriptscriptstyle{hG}}\bigr) 
\\ & 
\simeq F\bigl(Z, \bigl(\holim_\alpha 
X_\alpha\bigr)^{\negthinspace \scriptscriptstyle{hG}}\bigr).\end{align*}  
The above string of equivalences and the discussion that precedes it prove 
the following result.
\begin{Thm}\label{maintool} 
If the profinite group $G$ has finite vcd, 
$\{X_\alpha\}_\alpha$ is a continuous $G$-spectrum, and $Z$ is any 
spectrum with trivial $G$-action, then 
\[F(Z, \holim_\alpha X_\alpha)^{hG} \simeq 
F\bigl(Z, \bigl(\holim_\alpha X_\alpha\bigr)^{\negthinspace 
\scriptscriptstyle{hG}}\bigr).\]
\end{Thm}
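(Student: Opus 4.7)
The plan is to start from Definition \ref{defone}, use the finite vcd hypothesis at exactly one place to slide the homotopy fixed points past the smash with $DZ_\beta$, and then reassemble the pieces into a function spectrum by running the Spanier--Whitehead reduction of $F(Z, -)$ backwards.

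First I would unwind the left-hand side: by Definition \ref{defone}, writing $Z \simeq \hocolim_\beta Z_\beta$ with each $Z_\beta$ finite as in (\ref{hocolim}), we have
\[F(Z, \holim_\alpha X_\alpha)^{hG} = \holim_{\alpha,\beta}\,(X_\alpha \wedge DZ_\beta)^{hG}.\]
The crucial step is to invoke \cite[Remark 7.16]{cts}: because $G$ has finite vcd and each $DZ_\beta$ is a finite spectrum (with trivial $G$-action, so that $X_\alpha \wedge DZ_\beta$ is a discrete $G$-spectrum with the diagonal action), the fixed-point functor commutes with smashing by $DZ_\beta$, giving
\[(X_\alpha \wedge DZ_\beta)^{hG} \simeq (X_\alpha)^{hG} \wedge DZ_\beta.\]
From here I would identify $(X_\alpha)^{hG} \wedge DZ_\beta$ with $F(Z_\beta, (X_\alpha)^{hG})$ via Spanier--Whitehead duality (legitimate since $Z_\beta$ is finite), commute $\holim_\beta$ past the contravariant first variable of $F(-,-)$ to recover $F(\hocolim_\beta Z_\beta, (X_\alpha)^{hG}) \simeq F(Z, (X_\alpha)^{hG})$, and then pull $\holim_\alpha$ inside the second variable of $F(Z, -)$. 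The resulting inner homotopy limit $\holim_\alpha (X_\alpha)^{hG}$ is, by Definition (\ref{hfps}), exactly $(\holim_\alpha X_\alpha)^{hG}$.

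The main obstacle is the single nonformal input, namely \cite[Remark 7.16]{cts}; everything else is formal manipulation of homotopy (co)limits, function spectra, and Spanier--Whitehead duality for finite $G$-trivial spectra. I would take the cited commutation as a black box, noting that the finite vcd hypothesis is consumed there and nowhere else in the argument, so that any strengthening or weakening of the theorem's hypotheses would happen entirely at that step.
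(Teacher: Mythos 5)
Your proposal is correct and follows the paper's own argument essentially verbatim: unwind the left-hand side via Definition \ref{defone}, use \cite[Remark 7.16]{cts} together with the finiteness of each $DZ_\beta$ and the finite vcd of $G$ to commute $(-)^{hG}$ past $\wedge\, DZ_\beta$, and then reassemble via Spanier--Whitehead duality and Definition (\ref{hfps}). The identification of where the vcd hypothesis is consumed matches the paper exactly.
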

\par
In the 
case when $G$ is finite, Theorem \ref{maintool} is well-known. Also, 
there is a quick and interesting application of this result 
to chromatic homotopy theory. To see this, we need to 
pause to introduce the main actors, along with some notation. 
\par
Let $n \geq 1$, let $p$ be a fixed prime, and 
let $E_n$ be the Lubin-Tate spectrum with 
\[\pi_\ast(E_n) = 
W(\mathbb{F}_{p^{n}})\llbracket u_1, ..., u_{n-1}\rrbracket [u^{\pm 1}],\] where 
$W(\mathbb{F}_{p^{n}})$ is the ring of Witt vectors of the field 
$\mathbb{F}_{p^{n}}$, each $u_i$ has degree zero, and the degree of $u$ is 
$-2$. Also, set \[G_n = 
S_n \rtimes \mathrm{Gal}(\mathbb{F}_{p^{n}}/\mathbb{F}_p),\] 
the extended Morava 
stabilizer group; $G_n$ is 
a profinite group of finite vcd. Finally, let $K(n)$ be the $n$th Morava 
$K$-theory spectrum, $L_{K(n)}(S^0)$ the $K(n)$-local 
sphere, and 
\[M_0 \leftarrow M_1 \leftarrow M_2 \leftarrow \cdots \leftarrow M_i 
\leftarrow \cdots\] a tower of generalized Moore spectra, each of 
which is finite, such that 
there is an equivalence 
$L_{K(n)}(S^0) \simeq \holim_i \, L_{E(n)}(M_i),$ 
where $E(n)$ is the Johnson-Wilson spectrum 
(see \cite[Section 2]{HoveyCech}).
\par
We recall that in \cite{DH}, for any closed 
subgroup $K$ of $G_n$, Devinatz and Hopkins construct a 
commutative $S$-algebra $E_n^{dhK}$ 
that behaves like a $K$-homotopy fixed point spectrum. 
(We note that instead of the notation 
$E_n^{dhK}$, \cite{DH} uses ``$E_n^{hK}$." However, we reserve the 
notation $E_n^{hK}$ for the homotopy fixed point spectrum of $E_n$ that is 
formed with 
respect to the continuous action of $K$ on $E_n$ (as in 
\cite[Definition 9.2]{cts}).) 
Following \cite{cts}, we use this construction of Devinatz and Hopkins to 
define 
\[F_n = \colim_{N \vartriangleleft_o 
G_n} E_n^{dhN},\] where the colimit is over all open normal subgroups of 
$G_n$; since each $E_n^{dhN}$ is a $(G_n/N)$-spectrum, 
$F_n$ is a discrete $G_n$-spectrum and $\{F_n \wedge M_i\}_i$ is a 
continuous $G_n$-spectrum. Then 
there is the equivalence 
\[E_n^{hG_n} = \bigl(\holim_i \, (F_n \wedge M_i)\bigr)^{hG_n} \simeq 
L_{K(n)}(S^0),\] thanks to 
\cite[Theorem 1, $\negthinspace \negthinspace$ (iii)]{DH} and 
\cite[Theorem 8.2.1]{joint}. 
\par
Now we are ready to return to Theorem \ref{maintool}: if 
$G = G_n$, $\{X_\alpha\}_\alpha$ 
is set equal to 
$\{F_n \wedge M_i\}_i$, and $Z = E_n \simeq \holim_i \, (F_n \wedge M_i)$, 
then this result -- together with Definition \ref{defone} -- 
makes precise 
and justifies the assertion 
\[F(E_n, L_{K(n)}(S^0)) \simeq F(E_n, E_n)^{hG_n},\] which occurs 
in \cite[end of Section 8.1]{Rognes}. 
More generally, if $Z$ is any spectrum with 
trivial $G_n$-action, there is the equivalence
\[F(Z, L_{K(n)}(S^0)) \simeq F(Z, E_n)^{hG_n},\] where the 
left-hand side in this equivalence, 
\[F(Z, L_{K(n)}(S^0)) \simeq F\bigl(L_{K(n)}(Z), L_{K(n)}(S^0)\bigr),\] 
is equivalent to the $K(n)$-local Spanier-Whitehead dual of the $K(n)$-local 
spectrum $L_{K(n)}(Z)$. Thus, the functional dual in the $K(n)$-local 
category is given by $G_n$-homotopy fixed points.
\begin{Rk}
As shown in \cite[Section 1; Corollary 5.3]{jhrs}, there is an 
equivalence $(F_n)^{hG_n} \simeq L_{K(n)}(S^0)$, 
and hence, for $Z$ an arbitrary spectrum with trivial 
$G_n$-action, \[F\bigl(L_{K(n)}(Z), L_{K(n)}(S^0)\bigr) 
\simeq F\bigl(Z, (F_n)^{hG_n}\bigr) \simeq F(Z, F_n)^{hG_n}.\] Thus, we 
can conclude that the functional dual of $L_{K(n)}(Z)$ 
in the $K(n)$-local category is also given by the 
$G_n$-homotopy fixed points of the function spectrum $F(Z, F_n)$, 
which, curiously, is not necessarily $K(n)$-local (for example, when $Z = S^0$, 
$F(Z, F_n) \cong F_n$ is not $K(n)$-local, by \cite[Lemma 6.7]{cts}).
\end{Rk}
\par
In addition to the above conclusions, 
Theorem \ref{maintool} is also useful for further developing 
the theory of homotopy fixed points in at least two other ways: 
it plays a role in obtaining 
Theorem \ref{iterated},  
which is a result about iterated 
homotopy fixed points for a certain type of 
profinite Galois extension; and, for $K$ any closed subgroup 
of $G_n$ and $Z$ any spectrum, we show in Theorem \ref{secondmain} 
that 
the strongly convergent Adams-type spectral sequence 
with abutment $(E_n^{dhK})^\ast(Z)$, constructed by Devinatz and 
Hopkins in \cite{DH}, 
is actually a descent spectral sequence 
for the homotopy fixed point spectrum $F(Z, E_n)^{hK}$. 
To keep this Introduction from being 
unnecessarily redundant, we defer a fuller exposition of these 
two applications to Sections \ref{sectiontwo}~and~\ref{sectionthree}.
\vspace{.1in}
\par
\noindent
\textbf{Acknowledgements.} I thank Mark Behrens for discussions 
during our collaboration on \cite{joint} that were useful for the writing of this 
paper.
\section{Iterated homotopy fixed points for profinite 
Galois extensions}\label{sectiontwo}
Our first extended 
application of the tools developed in the Introduction is to the theory of 
profinite Galois extensions; for background on these extensions, we 
refer the reader to \cite{joint} and \cite{Rognes}. 
\par
We begin by establishing some notation. As in \cite{joint}, suppose that 
$L_k(-)$, Bousfield 
localization with respect to the spectrum $k$, satisfies the equivalence 
\[L_k(-) \simeq L_M(L_T(-)),\] where $M$ is 
a finite spectrum and $T$ is smashing. Also, suppose that $A$ is a cofibrant 
commutative symmetric ring spectrum that is $k$-local. Finally, given 
a profinite group $H$, let $(-)^{h_kH}$ denote 
the right derived functor of 
the fixed points $(-)^H$, with respect to the $k$-local model 
structure on discrete $H$-spectra (see \cite[Section 6.1]{joint}): given 
a discrete $H$-spectrum $Y$, 
\[Y^{h_kH} = (Y_{f_kH})^H,\] where $Y \rightarrow Y_{f_kH}$ is a natural 
trivial 
cofibration, with $Y_{f_kH}$ fibrant, in the $k$-local model structure on 
discrete $H$-spectra (henceforth, we will say that $Y_{f_kH}$ is a 
``$k$-locally fibrant discrete $H$-spectrum").
\par
As in \cite[Section 7]{joint}, we let $E$ be a consistent profaithful 
$k$-local profinite $G$-Galois extension of $A$ of finite vcd and 
we recall that by \cite[Proposition 6.2.3]{joint}, $E$ is a discrete 
$G$-spectrum. Also, we let $K$ be 
any closed normal subgroup of $G$. By 
\cite[Proposition 7.1.4]{joint}, $(E_{f_kG})^K$ is $k$-locally 
fibrant as a discrete $(G/K)$-spectrum, and hence, since the 
$(G/K)$-equivariant 
map \[\lambda \: 
(E_{f_kG})^K \xrightarrow{\simeq_k} \bigl((E_{f_kG})^K\bigr)_{f_kG/K}\] is a 
$k$-local equivalence 
between $k$-locally fibrant discrete $(G/K)$-spectra, the induced 
map $\lambda^{G/K}$, which has the form
\[E^{\scriptscriptstyle{h_kG}} = (E_{\scriptscriptstyle{f_kG}})^
{\scriptscriptstyle{G}} = \bigl((E_{\scriptscriptstyle{f_kG}})^{\scriptscriptstyle{K}}\bigr)^{\scriptscriptstyle{G/K}} 
\xrightarrow[\scriptscriptstyle{\lambda}^{\scriptscriptstyle{G/K}}]{\simeq_k}
\bigl(\bigl((E_{\scriptscriptstyle{f_kG}})^
{\scriptscriptstyle{K}}\bigr)_{\scriptscriptstyle{f_kG/K}}\bigr)^
{\scriptscriptstyle{G/K}} = 
((E_{\scriptscriptstyle{f_kG}})^{\scriptscriptstyle{K}})^
{\scriptscriptstyle{h_kG/K}},\] 
is a $k$-local equivalence. By \cite[Proposition 6.1.7, 
$\negthinspace \negthinspace \negthinspace$ (1)]{joint}, 
the source $E^{h_kG}$ and target 
$((E_{f_kG})^K)^{h_kG/K}$ of $\lambda^{G/K}$ are $k$-local spectra. 
Therefore, 
\begin{equation}\label{pregalois}\zig
\lambda^{G/K} \: E^{h_kG} \overset{\simeq}{\longrightarrow} 
((E_{f_kG})^K)^{h_kG/K}\end{equation} is a weak equivalence of spectra. 
\par
Notice that there is a zigzag of $k$-local equivalences 
\begin{equation}\label{prezigzag}\zig
E_{f_kG} \overset{\simeq_k}{\longrightarrow} 
(E_{f_kG})_{f_kK} \overset{\simeq_k}{\longleftarrow} 
E_{f_kK}\end{equation}
of discrete $K$-spectra.
Then taking the $K$-fixed points of zigzag (\ref{prezigzag}) 
gives the zigzag
\begin{equation}\label{zigzag}\zig
(E_{f_kG})^K \overset{\simeq_k}{\longrightarrow} 
\bigl((E_{f_kG})_{f_kK}\bigr)^K \overset{\simeq}{\longleftarrow} 
(E_{f_kK})^K = E^{h_kK},\end{equation} where the 
second map is a weak equivalence of spectra, since it is the $K$-fixed 
points of a $k$-local equivalence between $k$-locally fibrant discrete 
$K$-spectra, and the first map 
is a $k$-local equivalence, due to the combination of the last conclusion 
(about the second map) and the end of 
\cite[proof of Theorem 7.1.6]{joint}.
\par
In the $k$-local model structure on discrete $(G/K)$-spectra, the 
weak equivalences are those morphisms that are 
$k$-local equivalences, and hence, though $\bigl((E_{f_kG})_{f_kK}\bigr)^K$ 
and $(E_{f_kK})^K$ do not necessarily carry (pertinent, nontrivial) $(G/K)$-actions, 
zigzag (\ref{zigzag}) makes it reasonable 
-- in the $k$-local setting -- to identify the discrete $(G/K)$-spectrum 
$(E_{f_kG})^K$ with $E^{h_kK}$, so that equivalence 
(\ref{pregalois}) can be interpreted as the equivalence
\begin{equation}\label{galois}\zig
(E^{h_kK})^{h_kG/K} \simeq E^{h_kG}.\end{equation}
\par
Equivalence (\ref{galois}) says 
that given a sufficiently nice profinite Galois extension $E$, the iterated 
{\em $k$-local} homotopy fixed point spectrum $(E^{h_kK})^{h_kG/K}$ 
can be formed, and it behaves in a natural way, in that it 
is equivalent to $E^{h_kG}$ and thereby mimics the fixed point identity
\[(E^K)^{G/K} = E^G.\] 
\par
Though equivalence (\ref{galois}) is a step forward in the theory 
of profinite Galois extensions, we would like to have such a result about 
iterated homotopy fixed points that {\em avoids} the $k$-local setting 
that is used in (\ref{galois}). This is not an easy thing to achieve: as 
explained in detail in 
\cite[Sections 1, 3, and 4]{iterated} and \cite[Section 3.6]{joint}, 
there are subtleties 
with (non-local) iterated homotopy fixed points that, 
in general, make even forming the iterated homotopy fixed point 
spectrum a difficult task. However, with $E$ as above, we are able to 
show that $L_M(E)$ is the homotopy limit of a continuous $G$-spectrum, 
so that one can form $(L_M(E))^{hK}$, and this last spectrum 
is the homotopy limit of a continuous $(G/K)$-spectrum, so that one can 
form the iterated homotopy fixed point spectrum 
$\bigl((L_M(E))^{hK}\bigr)^{hG/K}$. 
Additionally, we obtain the following result.
\begin{Thm}\label{iterated}
Let $E$ be a consistent profaithful 
$k$-local profinite $G$-Galois extension of $A$ of finite vcd. If $K$ is a 
closed normal subgroup of $G$ such that $G/K$ has finite vcd, 
then there is an equivalence
\[\bigl((L_M(E))^{hK}\bigr)^{hG/K} \simeq (L_M(E))^{hG}.\] 
\end{Thm}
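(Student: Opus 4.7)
The plan is to express $L_M(E)$ as the homotopy limit of an explicit continuous $G$-spectrum, use the framework of the Introduction to make sense of both sides of the desired equivalence, and then reduce to the already-established $k$-local iteration (\ref{galois}) by exploiting the finite-vcd smash commutation that powers Theorem \ref{maintool}.

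First, since $M$ is a finite spectrum, I would produce a cofiltered diagram $\{M_\alpha\}_\alpha$ of finite spectra (with trivial $G$-action) such that $L_M(X) \simeq \holim_\alpha (X \wedge M_\alpha)$ for every spectrum $X$, by means of a suitable $M$-nilpotent tower. Applied to $E$ --- a discrete $G$-spectrum by \cite[Proposition 6.2.3]{joint} --- this presents $L_M(E) \simeq \holim_\alpha (E \wedge M_\alpha)$ as the homotopy limit of the continuous $G$-spectrum $\{E \wedge M_\alpha\}_\alpha$, and yields, by Definition (\ref{hfps}),
\[(L_M E)^{hG} = \holim_\alpha (E \wedge M_\alpha)^{hG}, \quad (L_M E)^{hK} = \holim_\alpha (E \wedge M_\alpha)^{hK}.\]
The closed subgroup $K$ has finite vcd (since $G$ does), and because $K$ is normal, each $(E \wedge M_\alpha)^{hK}$ inherits a discrete $(G/K)$-action; this presents $(L_M E)^{hK}$ as the homotopy limit of a continuous $(G/K)$-spectrum, making
\[\bigl((L_M E)^{hK}\bigr)^{hG/K} = \holim_\alpha \bigl((E \wedge M_\alpha)^{hK}\bigr)^{hG/K}\]
well-defined.

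Next I would apply \cite[Remark 7.16]{cts} --- the finite-vcd smash swap used to prove Theorem \ref{maintool} --- at each finite stage: since $G$, $K$, and $G/K$ all have finite vcd and each $M_\alpha$ is finite,
\[(E \wedge M_\alpha)^{hG} \simeq E^{hG} \wedge M_\alpha, \quad \bigl((E \wedge M_\alpha)^{hK}\bigr)^{hG/K} \simeq \bigl(E^{hK}\bigr)^{hG/K} \wedge M_\alpha.\]
Taking $\holim_\alpha$ then reduces the theorem to the assertion $L_M\bigl((E^{hK})^{hG/K}\bigr) \simeq L_M(E^{hG})$.

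Finally, I would invoke (\ref{galois}), $(E^{h_kK})^{h_kG/K} \simeq E^{h_kG}$, together with the natural maps $E^{hG} \to E^{h_kG}$ and $(E^{hK})^{hG/K} \to (E^{h_kK})^{h_kG/K}$ coming from the comparison of non-local with $k$-locally fibrant replacements. These are $k$-local equivalences, so using $L_k \simeq L_M \circ L_T$ with $L_T$ smashing, they become weak equivalences after $L_M$, yielding
\[L_M\bigl((E^{hK})^{hG/K}\bigr) \simeq L_M\bigl((E^{h_kK})^{h_kG/K}\bigr) \simeq L_M(E^{h_kG}) \simeq L_M(E^{hG}).\]
The hardest step will be this last one, because the $k$-local iteration passes through two different fibrant replacements (first for $K$ and then for $G/K$), and one must verify carefully that the interplay between the non-local and the $k$-local iterated constructions is compatible with $L_M$; this is precisely where the finite-vcd hypothesis on $G/K$, not just on $G$, is essential, since it is what permits the smash swap of Theorem \ref{maintool} to be applied at the outer level of the iteration.
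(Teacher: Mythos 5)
Your opening move is essentially the paper's: the paper presents $L_M(Y)$ as $F({^M\negthinspace S^0},Y) \simeq \holim_\beta\,(Y \wedge DW_\beta)$, where ${^M\negthinspace S^0} \simeq \hocolim_\beta W_\beta$ is the $[M,-]_\ast$-colocalization of $S^0$ written as a homotopy colimit of finite spectra, and then uses Theorem \ref{maintool} (finite vcd of $K$) to get $(L_M(E))^{hK} \simeq L_M(E^{hK})$. The endgame is also parallel: one application of Theorem \ref{maintool} for $G/K$. But the middle of your argument contains a genuine gap, and it is exactly the point the whole of Section \ref{extension} is built to circumvent. You assert that ``because $K$ is normal, each $(E \wedge M_\alpha)^{hK}$ inherits a discrete $(G/K)$-action.'' This is not true in general: $X^{hK} = (X_{fK})^K$, where $X \to X_{fK}$ is a fibrant replacement in the category of discrete $K$-spectra, and that replacement carries no residual $G$-action, so $G/K$ does not act on $X^{hK}$. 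This is precisely the well-documented subtlety with non-local iterated homotopy fixed points that the paper flags (citing \cite[Sections 1, 3, and 4]{iterated} and \cite[Section 3.6]{joint}); without resolving it, neither $\bigl((E \wedge M_\alpha)^{hK}\bigr)^{hG/K}$ nor the object $(E^{hK})^{hG/K}$ in your final reduction is even defined, so the appeal to equivalence (\ref{galois}) cannot get off the ground.

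The paper's actual resolution is to trade $E^{hK}$ for $(E_{fG})^K$, the $K$-fixed points of a fibrant replacement taken in $\Sigma\mathrm{Sp}_G$, which \emph{is} a genuine discrete $(G/K)$-spectrum. Concretely: $E^{hK}$ is shown to be $T$-local (via the cosimplicial spectrum $\mathrm{Map}^c(K^\bullet,E)$ and \cite[Lemma 6.1.4]{joint}), so $(L_M(E))^{hK} \simeq L_M(E^{hK}) \simeq L_k(E^{hK})$; then \cite[Corollary 7.1.3]{joint} gives $L_k(E^{hK}) \simeq L_k((E_{fG})^K)$; and $(E_{fG})^K \cong \colim_{K < U <_o G}(E_{fG})^U$ is $T$-local because each $(E_{fG})^U \simeq E^{hU}$ is and $T$ is smashing, whence $L_k((E_{fG})^K) \simeq L_M((E_{fG})^K)$. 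Only after this identification can the outer $(-)^{hG/K}$ be formed (via Definition \ref{deftwo}), and the proof closes with Theorem \ref{maintool} for $G/K$ together with $\bigl((E_{fG})^K\bigr)^{hG/K} \simeq E^{hG}$ from \cite[Proposition 3.5.1]{joint} --- no appeal to (\ref{galois}) is needed. Your proposal is missing this entire chain of $T$-locality and fibrant-replacement comparisons, which is the real content of the theorem; the finite-vcd smash-swap steps you emphasize are the easy part.
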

\par
The above theorem shows that, as desired, 
a sufficiently nice profinite Galois 
extension does indeed satisfy a non-local version of equivalence 
(\ref{galois}), when the quotient group $G/K$ has finite vcd. The 
proof of Theorem \ref{iterated} and the justification for the 
two conclusions that immediately 
precede it are placed at the end of this paper, 
in Section~\ref{extension}.
\begin{Rk}
Under the hypotheses of Theorem \ref{iterated}, there is an equivalence
\[(L_M(E))^{hK} \simeq E^{h_kK},\]
by (\ref{uselater}) and \cite[Proposition 6.1.7, 
$\negthinspace \negthinspace \negthinspace$ 
(3)]{joint}, and the same argument shows that 
\[(L_M(E))^{hG} \simeq E^{h_kG}.\] Thus, the equivalence of 
Theorem \ref{iterated} is exactly the equivalence of (\ref{galois}) above, but 
presented without using $k$-local homotopy fixed points.
\end{Rk}
\par
In the following two examples, we describe some situations in which the 
cohomological conditions on $G$ and $G/K$ in Theorem \ref{iterated} 
are satisfied.
\begin{Ex}
If a profinite group $H$ is compact $p$-adic analytic, then it has finite vcd 
(an explanation is written out just after Lemma 2.9 in \cite{cts}) and 
any quotient group $H/L$, where $L$ is a closed normal subgroup, is 
again a compact $p$-adic analytic group, by \cite[Theorem 9.6]{dixon}. 
Thus, if $G$ is compact $p$-adic analytic, then 
it and the quotient $G/K$ automatically have finite vcd.
\end{Ex}
\begin{Ex}
Suppose that $G$ 
is a pro-$p$ group of finite cohomological dimension. If $K$ is nontrivial, 
topologically finitely generated, and free as a pro-$p$ group, then 
$G/K$ has finite vcd, by \cite[Theorem 0.2]{engler}. If $K$ is analytic 
pro-$p$ of dimension $d \geq 1$, then $G/K$ again has finite vcd, 
by \cite[Theorem 0.7]{engler}. There are more such results in 
\cite{engler}. 
\end{Ex}
\section{A comparison of spectral sequences}
\label{sectionthree}
In this section, we give another application of Theorem \ref{maintool}. We will 
be referring to $E_n$, $G_n$, the tower $\{M_i\}_i$, and $F_n$, as 
defined in the Introduction. We note that given a spectrum $X$, 
we always use $\pi_\ast(X)$ to refer to the graded abelian group of 
maps in the stable homotopy category from sphere spectra to $X$. 
\par
Now, let $K$ be any closed subgroup of $G_n$. In the Introduction, we 
mentioned that the commutative $S$-algebra $E_n^{dhK}$ behaves 
like a $K$-homotopy fixed point spectrum. As an example of this behavior, 
by 
\cite[Theorem 2,$\negthinspace$ (ii)]{DH}, for any spectrum $Z$ 
with trivial $K$-action, where $Z \simeq \hocolim_\beta Z_\beta$ (as in equivalence (\ref{hocolim}); recall that each $Z_\beta$ 
is a finite spectrum), 
there is a strongly convergent $K(n)_\ast$-local $E_n$-Adams 
spectral sequence 
\begin{equation}\label{s.s.}\zig
H^s_c\bigl(K; (E_n)^{-t}(Z)\bigr) \Rightarrow (E_n^{dhK})^{-t+s}(Z),\end{equation} 
where 
\begin{equation}\label{ctscohomology}\zig
H^s_c\bigl(K; (E_n)^{-t}(Z)\bigr) = \lim_{\beta, i} H^s_c\bigl(K; 
\pi_t(E_n \wedge M_i \wedge DZ_\beta)\bigr),\end{equation} with $K$ acting trivially on 
each $M_i$ and $DZ_\beta$. We note that, 
since $G_n$ is a compact $p$-adic analytic group, it follows 
that $K$ is also, by \cite[Theorem 9.6]{dixon}, and hence, since 
each 
abelian group $\pi_t(E_n \wedge M_i 
\wedge DZ_\beta)$ is a finite discrete 
$\mathbb{Z}_p\llbracket K \rrbracket$-module, 
$H^s_c\bigl(K; \pi_t(E_n \wedge M_i \wedge DZ_\beta)\bigr)$ is a finite abelian 
group, by \cite[Proposition 4.2.2]{Symonds}. 
\begin{Rk}
To avoid any confusion, we point out that 
in (\ref{ctscohomology}) above, we are using the 
presentation of $H^s_c\bigl(K; (E_n)^{-t}(Z)\bigr)$ as an inverse limit 
that comes from the discussion between Corollary 3.4 and Lemma 3.5 
in \cite{LHS} and this same paper's Proposition 3.6, instead of the 
presentation that is given by \cite[Remark 1.3]{DH}. These two (closely related) 
presentations are isomorphic, but the former is more suitable for our purposes.
\end{Rk} 
\par
By \cite{DH} (see \cite[Theorem 6.3, Corollary 6.5]{cts} for an 
explicit proof), there is an equivalence 
\[E_n \wedge M_i \simeq F_n \wedge M_i,\] for each $i$. 
Thus, 
\[(E_n)^{-t}(Z) \cong \pi_t\bigl(F(Z, \holim_i \, (F_n \wedge M_i))\bigr),\] 
where $\{F_n \wedge M_i\}_i$ is a continuous $K$-spectrum, and, since 
\[E_n^{dhK} \simeq E_n^{hK},\] by \cite[Theorem 8.2.1]{joint}, where 
\[E_n^{hK} = \bigl(\holim_i \,(F_n \wedge M_i)\bigr)^{hK},\] we have
\[(E_n^{dhK})^{-t+s}(Z) \cong \pi_{t-s}\bigl(F(Z, E_n^{hK})\bigr) 
\cong \pi_{t-s}\bigl(F(Z, E_n)^{hK}\bigr),\] where $F(Z, E_n)^{hK}$ 
is defined as in Definition \ref{defone} and the last 
isomorphism above is due to Theorem \ref{maintool} and the fact that 
$K$ has finite vcd (since $G_n$ has finite vcd). The 
observations in the preceding sentence 
suggest that spectral sequence (\ref{s.s.}) ought to be isomorphic 
to a descent spectral sequence that has the form
\begin{equation*}
H^s_c\bigl(K; \pi_t(F(Z, E_n))\bigr) 
\Rightarrow \pi_{t-s}\bigl(F(Z, E_n)^{hK}\bigr);
\end{equation*} the following theorem shows that 
this suggestion is, in fact, correct.
\begin{Thm}\label{secondmain}
Let $K$ be a closed subgroup of $G_n$ and let 
$Z$ be a spectrum with trivial $K$-action. 
Then the strongly convergent Adams-type spectral sequence 
\begin{equation}\label{strongss}\zig
H^s_c\bigl(K; (E_n)^{-t}(Z)\bigr) 
\Rightarrow (E_n^{dhK})^{-t+s}(Z)\end{equation} is 
isomorphic to the descent spectral sequence
\begin{equation}\label{dss}\zig
H^s_c\bigl(K; \pi_t(F(Z, E_n))\bigr) 
\Rightarrow \pi_{t-s}\bigl(F(Z, E_n)^{hK}\bigr),\end{equation} 
from the $E_2$-term 
onward.
\end{Thm}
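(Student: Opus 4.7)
My plan is to compare the two spectral sequences in three steps: match the $E_2$-pages, match the abutments, and then promote these matchings to a morphism of the whole spectral sequences.

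For the $E_2$-comparison, I would show that $\pi_t F(Z, E_n)$ and $(E_n)^{-t}(Z)$ are isomorphic as continuous $K$-modules. By Definition \ref{defone}, $F(Z, E_n) \simeq \holim_{i,\beta}(F_n \wedge M_i \wedge DZ_\beta)$, and the equivalence $E_n \wedge M_i \simeq F_n \wedge M_i$ gives $\pi_t(F_n \wedge M_i \wedge DZ_\beta) \cong \pi_t(E_n \wedge M_i \wedge DZ_\beta)$, a finite abelian group by the discussion after (\ref{ctscohomology}). Finiteness makes the Mittag-Leffler condition hold on each tower, so $\lim^1$ vanishes and the Milnor short exact sequence yields a $K$-equivariant isomorphism
\[
\pi_t F(Z, E_n) \;\cong\; \lim_{i,\beta}\pi_t(E_n \wedge M_i \wedge DZ_\beta) \;=\; (E_n)^{-t}(Z),
\]
where the last equality is (\ref{ctscohomology}) at $s=0$. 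Applying $H^s_c(K;-)$ to both sides then gives the $E_2$-isomorphism between (\ref{strongss}) and (\ref{dss}).

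For the abutment, the identification $(E_n^{dhK})^{-t+s}(Z) \cong \pi_{t-s}F(Z, E_n)^{hK}$ is exactly what is recorded in the paragraph preceding the theorem, using the equivalence $E_n^{dhK} \simeq E_n^{hK}$ of \cite[Theorem 8.2.1]{joint} together with Theorem \ref{maintool} (applied to the continuous $K$-spectrum $\{F_n \wedge M_i\}_i$, noting that $K$ has finite vcd since $G_n$ does).

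The main obstacle is upgrading these two isomorphisms to an isomorphism of the spectral sequences themselves, so that the differentials are forced to agree. My approach would be to realize both spectral sequences as homotopy spectral sequences of closely related cosimplicial spectra and to construct a levelwise comparison between them that is compatible with coface and codegeneracy maps. The descent spectral sequence (\ref{dss}) is the homotopy spectral sequence of the cosimplicial cobar object whose totalization, by Definition \ref{defone}, computes $F(Z, E_n)^{hK}$. The Devinatz-Hopkins spectral sequence (\ref{strongss}) is constructed in \cite{DH} from the $K(n)$-local $E_n$-Adams resolution of $M_i$, smashed with $DZ_\beta$, pushed through $E_n^{dh(-)}$, and then totalized over $i$ and $\beta$. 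The equivalence $E_n \wedge M_i \simeq F_n \wedge M_i$ of \cite[Theorem 6.3, Corollary 6.5]{cts}, together with the identification of the $E_n$-Adams cobar tower with the standard cobar resolution by continuous mappings on $G_n$ that underlies the proof of \cite[Theorem 8.2.1]{joint}, should give a level-by-level $K$-equivariant equivalence of the two cosimplicial spectra. The induced map of spectral sequences is an iso on $E_2$ by the first step, and hence an iso from $E_2$ onward. The delicate point is tracking $K$-equivariance and the behavior of $K(n)$-localization uniformly through the tower so that the cosimplicial comparison really does produce a morphism of spectral sequences and not merely isomorphisms of $E_2$ and of abutments.
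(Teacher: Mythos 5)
Your identifications of the $E_2$-terms and of the abutments are essentially correct (modulo a small slip: equation (\ref{ctscohomology}) at $s=0$ gives the $K$-invariants $\bigl((E_n)^{-t}(Z)\bigr)^K$, not $(E_n)^{-t}(Z)$ itself, and passing $H^s_c(K;-)$ inside the inverse limit $\lim_{\beta,i}$ requires a continuity-of-cohomology argument such as \cite[Remark 1.3]{DH}). But the heart of the theorem is exactly the step you defer: showing that the isomorphism is compatible with the differentials, i.e., that you have an isomorphism of spectral sequences and not merely of $E_2$-pages and abutments. Your proposed route --- a global, levelwise $K$-equivariant comparison of cosimplicial spectra built from the $K(n)$-local $E_n$-Adams resolution on one side and the $\mathrm{Map}^c(K^\bullet,-)$ cobar construction on the other --- is plausible in outline, but you explicitly flag the crucial difficulty (``tracking $K$-equivariance and the behavior of $K(n)$-localization uniformly through the tower'') without resolving it, so as written the proof has a genuine gap precisely where the content lies.

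The paper closes this gap differently, and more economically: it does \emph{not} construct a global comparison. Instead it observes that both spectral sequences are, by construction, inverse limits over $(\beta,i)$ of spectral sequences attached to the finite spectra $M_i\wedge DZ_\beta$ --- the Adams-type one is $\lim_{\beta,i}{}^{\mathcal{A}}E_r^{\ast,\ast}(\beta,i)$ and the descent one is $\lim_{\beta,i}{}^{\mathcal{D}}E_r^{\ast,\ast}(\beta,i)$ --- and then cites the already-established isomorphism ${}^{\mathcal{A}}E_r^{\ast,\ast}(\beta,i)\cong{}^{\mathcal{D}}E_r^{\ast,\ast}(\beta,i)$ from the $E_2$-term onward (\cite[proof of Theorem 8.2.5]{joint}) for each fixed $(\beta,i)$, so that the isomorphism of the limit spectral sequences follows. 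Your plan effectively asks you to reprove that cited result in a more complicated setting. Separately, you take the existence of the descent spectral sequence (\ref{dss}) with continuous-cohomology $E_2$-term for granted; the paper must first establish it, which requires the vanishing of $\lims_{\beta,i}\mathrm{Map}^c\bigl(K^m,\pi_q(F_n\wedge M_i\wedge DZ_\beta)\bigr)$ for $s>0$ (via \cite[Lemma 4.21(i)]{DH} and \cite[Section 4.6]{joint}) --- your Mittag-Leffler observation on homotopy groups does not by itself supply this, since what is needed is $\lim^s$-vanishing for the towers of mapping-group coefficients appearing in the cosimplicial construction, not just $\lim^1$-vanishing of the homotopy groups of the spectra themselves.
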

\begin{proof}
Our first step 
is to show that the descent 
spectral sequence exists. Given a 
profinite group $H$ and a discrete abelian group $A$, we let 
$\mathrm{Map}^c(H,A)$ denote the abelian group of continuous 
functions $H \rightarrow A$. Then 
\[\lims_{\negthinspace \negthinspace 
\beta, i} \mathrm{Map}^c\bigl(K^m, \pi_q(F_n \wedge M_i 
\wedge DZ_\beta)\bigr) = 0,\] for all $s > 0$, all $m \geq 0$, and all $q \in 
\mathbb{Z}$ (this follows from \cite[Lemma 4.21, $\negthinspace 
\negthinspace \negthinspace$ (i)]{DH}, since the ``$G_n$" that is in 
\cite[Lemma 4.21, $\negthinspace 
\negthinspace$ (i) and its proof]{DH} can be changed to any profinite 
group, without affecting the validity of the argument), and therefore, 
by \cite[Section 4.6]{joint}, there is a homotopy spectral sequence that has 
the form
\[H^s_\mathrm{cts}\bigl(K; \pi_t(F(Z, E_n))\bigr) \Rightarrow 
\pi_{t-s}\bigl(F(Z,E_n)^{hK}\bigr),\] where 
$H^s_\mathrm{cts}\bigl(K; \pi_t(F(Z, E_n))\bigr)$ denotes the continuous 
cohomology of continuous cochains, with coefficients in the 
profinite $\mathbb{Z}_p\llbracket K \rrbracket$-module $\pi_t(F(Z,E_n))$. 
By \cite[Remark 1.3]{DH}, there is an isomorphism
\[H^s_\mathrm{cts}\bigl(K; \pi_t(F(Z, E_n))\bigr) \cong 
H^s_c\bigl(K; \pi_t(F(Z, E_n))\bigr),\] so that the above homotopy 
spectral sequence is the desired descent spectral sequence.
\par
Spectral sequence (\ref{strongss}) is the inverse limit $\lim_{\beta, i} 
{^\mathcal{A} \mspace{-1.5mu} E_r^{\ast, \ast}(\beta, i)}$ of 
$K(n)_\ast$-local 
$E_n$-Adams spectral sequences $^\mathcal{A} \mspace{-1.5mu} 
E_r^{\ast, \ast}(\beta, i)$ 
that have the form
\begin{equation*}
^\mathcal{A} \mspace{-1.5mu} E_2^{s,t}(\beta,i) = 
H^s_c\bigl(K; \pi_t(E_n \wedge M_i \wedge DZ_\beta)\bigr) 
\Rightarrow \pi_{t-s}(E_n^{dhK} \wedge M_i \wedge DZ_\beta).
\end{equation*} Similarly, spectral sequence (\ref{dss}) is the inverse 
limit $\lim_{\beta, i} {^\mathcal{D} \mspace{-2.5mu} E_r^{\ast, \ast}(\beta,i)}$ of 
descent spectral sequences ${^\mathcal{D} \mspace{-2.5mu} 
E_r^{\ast, \ast}(\beta,i)}$ 
that have the form
\[^\mathcal{D} \mspace{-2.5mu} E_2^{s,t}(\beta, i) = 
H^s_c\bigl(K; \pi_t(F_n \wedge M_i \wedge DZ_\beta)\bigr) \Rightarrow 
\pi_{t-s}(E_n^{hK} \wedge M_i \wedge DZ_\beta),\] where 
the abutment of spectral sequence 
$\lim_{\beta, i} {^\mathcal{D} \mspace{-2.5mu} E_r^{\ast, \ast}(\beta,i)}$ 
is identified by using the equivalence $F(Z, E_n^{hK}) \simeq F(Z,E_n)^{hK}$. 
By \cite[proof of Theorem 8.2.5]{joint}, 
for each $\beta$ and $i$, there is an isomorphism
\[{^\mathcal{A} \mspace{-2.5mu} E_r^{\ast, \ast}(\beta,i)} 
\cong {^\mathcal{D} \mspace{-2.5mu} E_r^{\ast, \ast}(\beta,i)}\] of 
spectral sequences from the $E_2$-terms onward, completing 
the proof of the theorem.
\end{proof}
\section{The proof of Theorem \ref{iterated}}\label{extension}
In this section, we use the notation that was established in 
Section~\ref{sectiontwo}.
\par
Since $M$ is a 
finite spectrum, \cite[Proposition~3.6]{Bousfieldlocal} implies that, for any 
spectrum $Y$,  
\[L_M(Y) \simeq F({^M\negthinspace S^0},Y), \]
where ${^M\negthinspace S^0} 
\rightarrow S^0$ denotes the $[M,-]_\ast$-colocalization of the sphere 
spectrum $S^0$. Then, as in the Introduction, by writing
\[^M\negthinspace S^0 \simeq \hocolim_{\beta} W_\beta,\] where the 
right-hand side is a 
homotopy colimit of a directed system $\{W_\beta\}_\beta$ of 
finite spectra, we obtain that 
\[L_M(Y) \simeq \holim_\beta \, (Y \wedge DW_\beta).\] 
\par
Now let $G$ be any profinite group (we are not assuming that $G$ has 
finite vcd), 
let $X$ be a discrete $G$-spectrum, and give 
$^M\negthinspace S^0$ and each $DW_\beta$ trivial $G$-action. 
Then $\{X \wedge DW_\beta\}_\beta$ is a continuous $G$-spectrum and 
\[L_M(X) \simeq \holim_\beta \, (X \wedge DW_\beta).\] These 
conclusions motivate 
the following definition. 
\begin{Def}\label{deftwo}
If $G$ is any profinite group and 
$X$ is a discrete $G$-spectrum, then it is natural to identify 
$L_M(X)$ with $F(^M \negthinspace S^0, X)$, and hence, we define
\[(L_M(X))^{hG} = F(^M \negthinspace S^0, X)^{hG},\] so that, by 
Definition \ref{defone},
\[(L_M(X))^{hG} =  \holim_\beta \, (X \wedge DW_\beta)^{hG}.\]
\end{Def}
\par
We are now ready to prove Theorem \ref{iterated}: we suppose that 
$E$ is a consistent profaithful $k$-local profinite 
$G$-Galois extension of $A$ that 
has finite vcd. Recall that $E$ is a discrete $G$-spectrum, so that 
by Definition \ref{deftwo}, 
for any closed subgroup $H$ of $G$,
\begin{align*}
(L_M(E))^{hH} & = F(^M\negthinspace S^0, E)^{hH} \\
& = \holim_\beta \, (E \wedge DW_\beta)^{hH}.\end{align*}
Since $G$ has finite vcd, 
$K$ does too, and hence, 
Theorem \ref{maintool} implies that
\[
(L_M(E))^{hK} \simeq F(^M \negthinspace S^0, E^{hK}) \simeq L_M(E^{hK}).
\]
\par
For the next step in our argument, we make a few recollections 
from \cite[Sections 2.4, 3.2]{joint}. Given any profinite group $P$, 
let \[\mathrm{Map}^c(P, E) = \colim_{N \vartriangleleft_o P} 
\mathrm{Map}(P/N, E) \cong \colim_{N \vartriangleleft_o P} 
\textprod_{\scriptscriptstyle{P/N}} E.\] Then $\mathrm{Map}^c(K, -)$ is a 
coaugmented comonad on the category of spectra 
and we let 
$\mathrm{Map}^c(K^\bullet, E)$ be the associated cosimplicial 
spectrum (obtained through the 
comonadic cobar construction), 
 which, in codegree $k$, satisfies the isomorphism
\[\bigl(\mathrm{Map}^c(K^\bullet,E)\bigr)^k 
\cong \mathrm{Map}^c(K^k, E).\] 
\par
By \cite[Theorem 3.2.1]{joint},
\[E^{hK} \simeq \holim_\Delta \mathrm{Map}^c(K^\bullet, E).\] By 
\cite[Remark 6.2.2]{joint}, $E$ is $T$-local, and hence, 
by \cite[Lemma 6.1.4, $\negthinspace \negthinspace \negthinspace$ (3)]{joint}, the cosimplicial spectrum 
$\mathrm{Map}^c(K^\bullet, E)$ is $T$-local in each codegree. Thus, 
the homotopy limit $\displaystyle{\holim_\Delta} \, 
\mathrm{Map}^c(K^\bullet, E)$ is 
$T$-local, implying that $E^{hK}$ is also $T$-local, so that 
\begin{equation}\label{uselater}\zig
(L_M(E))^{hK} \simeq L_M(E^{hK}) \simeq L_M(L_T(E^{hK})) \simeq 
L_k(E^{hK}).\end{equation}
By \cite[Corollary 7.1.3]{joint}, there is an equivalence 
\begin{equation}\label{moreuselater}\zig
L_k(E^{hK}) \simeq L_k((E_{fG})^K),\end{equation} where 
$E \rightarrow E_{fG}$ is a trivial cofibration, with $E_{fG}$ fibrant, in 
the model category $\Sigma \mathrm{Sp}_G$. Therefore, putting 
(\ref{uselater}) and (\ref{moreuselater}) together yields 
\[(L_M(E))^{hK} \simeq L_k((E_{fG})^K).\] 
\par
Notice that 
\[(E_{fG})^K \cong \colim_{K {<} \,{U} {<_o} G} (E_{fG})^U.\]
By the proof of \cite[Proposition 3.3.1, 
$\negthinspace \negthinspace$ (3)]{joint}, $(E_{fG})^U \simeq E^{hU}$. 
The argument above that showed that $E^{hK}$ is $T$-local also shows 
that each $E^{hU}$, and hence, each $(E_{fG})^U$, 
is $T$-local. Since $T$ is smashing, the filtered colimit 
$\displaystyle{\colim_{K < \, U <_o G}} \, (E_{fG})^U$ is $T$-local, 
implying that $(E_{fG})^K$ is too. We conclude that 
\begin{equation}\label{lastlabel}\zig
(L_M(E))^{hK} \simeq L_k((E_{fG})^K) \simeq L_M((E_{fG})^K)\end{equation} 
and we note that $(E_{fG})^K$ is a discrete 
$(G/K)$-spectrum. Therefore, thanks to (\ref{lastlabel}), 
we identify $(L_M(E))^{hK}$ with $L_M((E_{fG})^K)$, and hence,
\[\bigl((L_M(E))^{hK}\bigr)^{hG/K} = 
\bigl(L_M((E_{fG})^{K})\bigr)^{hG/K}.\] Thus, by Definition 
\ref{deftwo}, we have 
\[\bigl((L_M(E))^{hK}\bigr)^{hG/K} = 
F\bigl({^M\negthinspace S^0}, (E_{fG})^K\bigr)^{hG/K}.\] 
\par
Now suppose that $G/K$ has finite vcd. Then the proof of Theorem 
\ref{iterated} is completed by the equivalences
\begin{align*}
F\bigl({^M\negthinspace S^0}, (E_{fG})^K\bigr)^{hG/K} & \simeq 
F\Bigl({^M\negthinspace S^0}, \bigl((E_{fG})^K\bigr)^{hG/K}\Bigr) \\ 
& \simeq F\bigl({^M\negthinspace S^0}, E^{hG}\bigr) \\
& \simeq F\bigl({^M\negthinspace S^0}, E\bigr)^{hG} \\ 
& = (L_M(E))^{hG},\end{align*} where the 
first equivalence is due to Theorem \ref{maintool}, the second 
equivalence follows from \cite[Proposition 3.5.1]{joint}, and the third 
equivalence comes from another application of Theorem \ref{maintool}.

\end{document}